\title{Existence of canonical models for Kawamata log terminal pairs}
\author{ Zhengyu Hu}
\date{2020/04/06}
\keywords{Kawamata log terminal pair, klt pair, canonical model}
\subjclass[2010]{14N30,14E30}
\address{Mathematical Sciences Research Center, 
	Chongqing University of Technology, No.69 Hongguang Avenue, Chongqing, 400054, China}
\email{zhengyuhu16@gmail.com}
\newcommand{\Supp}[0]{{\operatorname{Supp}}}
\DeclareMathOperator{\CDiv}{CDiv}
\DeclareMathOperator{\ex}{Ex}
\newtheorem{thm}{Theorem}[section]
\newtheorem{lem}[thm]{Lemma}
\theoremstyle{definition}
\newtheorem{defn}[thm]{Definition}
\newtheorem{rem}[thm]{Remark}
\newtheorem*{claim*}{Claim}
\newcommand{\K}{\mathbb K}
\newcommand{\Q}{\mathbb Q}
\newcommand{\R}{\mathbb R}
\newcommand{\rddown}[1]{\left\lfloor{#1}\right\rfloor} % round-down
\begin{document}

\maketitle

\begin{abstract}
We prove that a Kawamata log terminal pair has the canonical model.
\end{abstract}

%\tableofcontents

\section{Introduction}\label{sec1}
We work over an algebraically closed field of characteristic zero. \\

Our main result is the existence of canonical models for Kawamata log terminal pairs.
\begin{thm}\label{thm-canonical-model}
	Let $(X/Z,B)$ be a Kawamata log terminal pair with the Kodaira dimension $\kappa_\iota(X/Z,K_X+B) \ge 0$. Then, $(X/Z,B)$ has the canonical model.
\end{thm}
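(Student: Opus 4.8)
The plan is to reduce, using the relative Iitaka fibration together with the canonical bundle formula, to the case in which $K_X+B$ is big over the base, where one may invoke Birkar--Cascini--Hacon--McKernan and, on the base of the fibration, its generalisation to generalized pairs. For $B$ a $\mathbb Q$-divisor the canonical model is $\Proj_Z$ of the relative canonical ring $R=\bigoplus_{m\ge 0}\pi_*\mathcal O_X(\lfloor m(K_X+B)\rfloor)$, so the content of the statement is finite generation of $R$ as an $\mathcal O_Z$-algebra; the general $\mathbb R$-coefficient case is recovered by a perturbation argument inside the rational polytope of boundaries, so I assume from now on that $B$ is a $\mathbb Q$-divisor and $K_X+B$ is $\mathbb Q$-Cartier. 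If $K_X+B$ is big over $Z$ the conclusion is immediate: by BCHM $(X/Z,B)$ has a log terminal model on which $K_X+B$ is nef and big, hence semiample by the relative base-point-free theorem, so $R$ is finitely generated and $\Proj_Z R$ is the canonical model.

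In general, replace $X$ by a log resolution and let $f\colon X\to Y$ be the relative Iitaka fibration of $K_X+B$ over $Z$, with connected fibres, $\dim Y-\dim Z=\kappa_\iota(X/Z,K_X+B)$, and with $\kappa\bigl((K_X+B)|_F\bigr)=0$ on a general fibre $F$. I then apply the Fujino--Mori canonical bundle formula: after suitable birational modifications of $X$ and $Y$, the fibrewise unique pluricanonical divisor on $F$ spreads out to an identity $K_X+B\sim_{\mathbb Q}f^*(K_Y+B_Y+M_Y)$, where $B_Y$ is the discriminant $\mathbb Q$-divisor, $M_Y$ the moduli part, $(Y,B_Y+M_Y)$ is a generalized klt pair with $M_Y$ nef, and $K_Y+B_Y+M_Y$ is big over $Z$ because $f$ is the Iitaka fibration. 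By the minimal model program for generalized klt pairs of general type (Birkar--Zhang), $(Y/Z,B_Y+M_Y)$ has a good minimal model and hence finitely generated generalized canonical ring; the canonical bundle formula identifies a Veronese subalgebra of the latter with the corresponding Veronese subalgebra of $R$, so $R$ is finitely generated and $\Proj_Z R$ is the canonical model of $(X/Z,B)$. One then transports this to the original pair using that the canonical model is a birational invariant.

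I expect the substance of the argument to lie in executing the canonical bundle formula at this level of generality: carrying out the construction relatively over $Z$; controlling the moduli $\mathbf b$-divisor, in particular its nefness after a birational modification and the rationality of the discriminant coefficients; and, most of all, verifying that the ring-theoretic output $\Proj_Z R$ really is the canonical model, i.e.\ that $X\bir\Proj_Z R$ is a birational contraction along which $K_X+B$ pushes forward to the ample class --- equivalently that high pluricanonical systems acquire no fixed part horizontal over $\Proj_Z R$. This last point is the abundance-flavoured heart of the matter; it is tied to showing that a general fibre $(F,B_F)$, which is klt with $\kappa(K_F+B_F)=0$, has $K_F+B_F$ semiample on some minimal model, and that these fibrewise models assemble into a single relative good model over $Y$. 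Handling this uniformly over the base is where genuinely new input, rather than a citation, is required; the remaining reductions --- the $\mathbb R$-coefficient case, the matching of Veronese subalgebras, and the passage back to the original $X$ --- should be technical but routine.
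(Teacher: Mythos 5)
Your plan follows the same overall architecture as the paper --- reduce via the relative Iitaka fibration to $\kappa=0$ over the base, apply a canonical bundle formula, and invoke the generalized MMP of Birkar--Zhang on the base --- but it contains a critical gap exactly at the point the paper is written to address. You write that ``the general $\mathbb R$-coefficient case is recovered by a perturbation argument inside the rational polytope of boundaries, so I assume from now on that $B$ is a $\mathbb Q$-divisor.'' This is unjustified and, as stated, false: the ample model of $K_X+B$ is not a continuous or piecewise-constant function of $B$ in any naive sense when $K_X+B$ is not big, the Iitaka dimension and the Iitaka fibration can jump under perturbation, and even when they do not the moduli and discriminant parts change with $B$. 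The $\mathbb Q$-coefficient case of the theorem is exactly \cite[Corollary~1.1.2]{bchm}, as the paper notes; the \emph{entire content} of the present paper is how to handle $\mathbb R$-coefficients. The paper does use a convex-combination/perturbation idea, but it is carried out at the level of the klt-trivial fibration, not at the level of the original pair: Lemmas~\ref{lem-Q-lc-trivial-fib} and~\ref{lem-R-klt-trivial} carefully express an $\mathbb R$-klt-trivial fibration as a convex combination of $\mathbb Q$-klt-trivial ones on a common rational sub-polytope where the discriminant function is affine, so that Ambro's Hodge-theoretic b-nefness transfers to the $\mathbb R$-moduli b-divisor. Collapsing this to ``perturb and assume $\mathbb Q$'' hides precisely the work that makes the theorem new.

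Two further, smaller mismatches. First, you worry at length about an ``abundance-flavoured heart'' --- that the fibres over the Iitaka base need to acquire no fixed horizontal part, and that fibrewise good models must be glued. In the paper this is not an abundance problem at all: Theorem~\ref{thm-canonical-bundle-formula} (using Abramovich--Karu toroidalization to make the vertical residue $R^v$ very exceptional, together with $\kappa(X'/Y',R^h)=0$) feeds directly into Lemma~\ref{lem-ample-model}, which deduces the ample model upstairs from the semi-ample model of $K_{Y'}+B_{Y'}+M_{Y'}$ by the negativity lemma --- no fibrewise semi-ampleness or gluing of fibrewise minimal models is required. Second, you frame everything as finite generation of the canonical ring and matching of Veronese subalgebras; this is a reasonable $\mathbb Q$-coefficient language but does not transfer cleanly to $\mathbb R$-coefficients, which is why the paper works instead with the BCHM notion of ample model and its characterisation via the fixed part of $|p^*D/Z|_{\mathbb R}$. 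To repair your argument you would need to replace the opening ``perturb to $\mathbb Q$'' step by the $\mathbb R$-klt-trivial canonical bundle formula machinery and to replace the ring-theoretic transfer by a direct ample-model argument of the type in Lemma~\ref{lem-ample-model}.
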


If $B$ is a $\Q$-divisor, then Theorem \ref{thm-canonical-model} is \cite[Corollary 1.1.2]{bchm}. In this paper, we prove it for the general case. The idea of proof is to reduce Theorem \ref{thm-canonical-model} to \cite[Theorem 1.2]{bchm}, by a canonical bundle formula of Fujino-Mori type for $\R$-divisors (cf. \cite{fujino-mori}).

\begin{thm}\label{thm-canonical-bundle-formula}
	Let $f:X \to Y$ be a contraction of normal varieties and $(X,B)$ be a klt pair such that $\kappa_{\iota}(X/Y,K_X+B)=0$. Then, there exists a commutative diagram
	$$
	\xymatrix{
		(X',B') \ar[d]_{f'} \ar[r]^{\pi}   &  (X,B)\ar[d]^{f}\  &\\
		Y' \ar[r]^{\phi} &    Y } 
	$$  
	which consists of birational models $\pi:X' \to X$ and $\phi:Y' \to Y$, such that:
	\begin{enumerate}
		\item $K_{X'}+B'=\pi^*(K_X+B)+E$ where $E$ is exceptional$/X$ and $B',E\ge 0$ have no common components.
		
		\item $K_{X'}+B' \sim_{\mathbb{R}} f'^*(K_{Y'}+B_{Y'}+M_{Y'})+R$ where $R \ge 0$ and $(Y',B_{Y'}+M_{Y'})$ is a g-klt generalised pair with the moduli b-divisor $\mathbf{M}$.
		
		\item $\kappa(X'/Y',R^h)=0$ and $R^v$ is very exceptional$/Y'$, where $R^h$ (resp. $R^v$) denotes the horizontal (resp. vertical) part over $Y'$.
	\end{enumerate}
\end{thm}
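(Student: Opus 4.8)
The plan is to reduce to the well-understood $\Q$-divisor case by perturbing $B$ inside a rational polytope and then gluing the resulting data. First I would write $B = \sum r_i B_i$ with $B_i$ prime divisors and consider the rational affine subspace $\mathcal{V}$ of divisors $\Delta = \sum d_i B_i$ with the same support; inside $\mathcal{V}$ the klt condition, the finite generation consequences of BCHM, and (crucially) the condition $\kappa_\iota(X/Y, K_X+\Delta)=0$ cut out a rational polytope $\mathcal{P}$ containing $B$ (here one uses that $\kappa_\iota = 0$ is equivalent, after running an appropriate $(K_X+B)$-MMP over $Y$, to the existence of a good minimal model whose stable base locus has no horizontal part — an open-and-rational condition in $\Delta$). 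So $B = \sum_j s_j \Delta_j$ is a convex combination of $\Q$-boundaries $\Delta_j \in \mathcal{P}$ with $\kappa_\iota(X/Y, K_X+\Delta_j)=0$.

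Next, for each $j$ I would invoke the classical Fujino–Mori canonical bundle formula \cite{fujino-mori} for the klt pair $(X,\Delta_j)$ over $Y$: possibly after a common birational model $\pi : X' \to X$ resolving all the $\Delta_j$ simultaneously and a common base change $\phi : Y' \to Y$ (taken fine enough to resolve the moduli part for every $j$ and to make all the discriminant divisors $B_{Y',j}$ behave well), one gets
$$
K_{X'}+\Delta_j' \sim_{\Q} f'^*(K_{Y'}+B_{Y',j}+M_{Y',j}) + R_j
$$
with $R_j \ge 0$, $(Y', B_{Y',j}+M_{Y',j})$ g-klt, and $\kappa(X'/Y', R_j^h)=0$, $R_j^v$ very exceptional$/Y'$. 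Writing $K_{X'}+\Delta_j' = \pi^*(K_X+\Delta_j) + E_j$ with $E_j$ exceptional and $\Delta_j', E_j$ without common components, I then set $B' := \sum_j s_j \Delta_j'$, $E := \sum_j s_j E_j$, $B_{Y'} := \sum_j s_j B_{Y',j}$, $\mathbf{M} := \sum_j s_j \mathbf{M}_j$, $R := \sum_j s_j R_j$. By linearity $K_{X'}+B' = \pi^*(K_X+B) + E$ with $E$ exceptional$/X$; one checks $B', E \ge 0$ have no common components by choosing the $\Delta_j$ close enough to $B$ that $B'$ and $E$ have the same supports as in a single fixed log resolution. Part (2) is then immediate from linearity, and g-klt-ness of $(Y', B_{Y'}+M_{Y'})$ follows because g-klt is an open condition and each summand is g-klt. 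For part (3): $R^h = \sum s_j R_j^h$ and, since each $\kappa(X'/Y', R_j^h)=0$ means $R_j^h$ is the fixed part (over $Y'$) of a divisor numerically trivial$/Y'$, the same holds for the convex combination after possibly a further small MMP; $R^v = \sum s_j R_j^v$ is very exceptional$/Y'$ because a nonnegative combination of very exceptional divisors is very exceptional.

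The main obstacle I expect is consistency of the birational models and the very-exceptionality/$\kappa=0$ statement in part (3) under taking convex combinations: the Fujino–Mori output for different $\Delta_j$ a priori lives on different models, so one must argue that a single sufficiently high $\pi$ and a single sufficiently divisible base change $\phi$ work for all finitely many $j$ at once, and that on this common model the decomposition into horizontal/vertical parts is compatible with addition. For the horizontal part one wants $R^h$ to be a "fixed" divisor; this uses that $\kappa(X'/Y', \cdot)=0$ detects the unique effective member, and that $\sum s_j R_j^h \sim_\R f'^*(\text{something})$ horizontally trivial forces $R^h$ to be that unique member — here a short MMP or the negativity lemma over $Y'$ (as in \cite{bchm}) does the bookkeeping. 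For $R^v$ very exceptional I would verify the codimension condition of \cite{birkar-flip} directly: over the generic point of each prime divisor $D$ on $Y'$, $\Supp R^v$ does not dominate $D$ with full-dimensional fibre for any single $R_j^v$, hence not for the sum. Once these compatibilities are in place the theorem follows by assembling the pieces linearly.
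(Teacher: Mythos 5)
Your proposal takes a genuinely different route from the paper, and I see a real gap in it. Your plan is to push the convex--combination/rational--polytope argument all the way back to the original pair $(X,B)$ over $Y$: you want to find $\Q$-boundaries $\Delta_j$ near $B$ with $\kappa_\iota(X/Y,K_X+\Delta_j)=0$, apply Fujino--Mori to each, and sum the outputs. The step that does not go through is the claim that $\kappa_\iota(X/Y,K_X+\Delta)=0$ ``cuts out a rational polytope'' in the space of boundaries supported on $\operatorname{Supp} B$. Kodaira dimension is neither open nor closed in $\Delta$ and the locus $\{\kappa_\iota=0\}$ has no reason to have rational boundary; adding even a tiny horizontal component to $B$ can jump $\kappa_\iota$ up, subtracting one can drop it to $-\infty$, and the threshold where this happens is generally irrational (already for line bundles on an abelian variety the effectivity locus is a round cone). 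Your parenthetical that this reduces to ``existence of a good minimal model whose stable base locus has no horizontal part'' invokes a form of relative abundance that is precisely what is open in this generality, so it cannot be taken for granted. Without the $\Delta_j$ having $\kappa_\iota=0$ over $Y$, the Fujino--Mori inputs for the individual $j$ do not exist, and the rest of the gluing cannot start.

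The paper avoids this entirely by \emph{not} perturbing $B$ at the outset. It first picks a single $D\ge 0$ with $K_X+B\sim_\R D/Y$ (Hashizume--Hu), passes via Abramovich--Karu weak semistable reduction and Abramovich--Denef--Karu to an equidimensional toroidal model $f':(X',\Delta')\to(Y',\Delta_{Y'})$ dominating $f$, and on that model subtracts a suitable $f'^*G$ from $D'=\pi^*D+E$ so that the remainder $R$ has $\kappa(X'/Y',R^h)=0$ and $R^v$ very exceptional, which is item (3). The crucial point is that the induced sub-pair $(X',\Theta)$ with $\Theta=B'-R$ then satisfies the \emph{exact} relation $K_{X'}+\Theta\sim_\R 0/Y'$, so $f':(X',\Theta)\to Y'$ is an $\R$-klt-trivial fibration (this is the paper's Lemma \ref{lem-rank}). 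Only at this stage does the convex--combination argument enter (Lemma \ref{lem-R-klt-trivial}, building on Lemma \ref{lem-Q-lc-trivial-fib}), and it works precisely because $K_{X'}+\Theta\sim_\R 0/Y'$ is a rational affine condition, not a Kodaira-dimension constraint. So the polytope trick is sound in the paper, but it is applied to the klt-trivial fibration after the toroidal reduction, not to the original $(X,B)$ over $Y$. If you want to salvage your outline, you should move the rational-decomposition step to after the reduction to $\sim_\R 0/Y'$, exactly as the paper does; the ambient space there is a genuine rational polytope and your gluing then goes through (and your observations about $R^h$ restricting to the unique member of $|(K_X+B)|_F|_\R$, and about very-exceptionality being preserved under sums, are correct once that framework is in place).
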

One can easily generalise the above theorem to log canonical pairs. See Remark \ref{rem-lc-formula}.

\section{Preliminaries}\label{sec2}
In this section we collect definitions and some important results. Throughout this paper all varieties are quasi-projective over a fixed algebraically closed field of characteristic zero and a divisor refers to an $\R$-Weil divisor unless stated otherwise. 

\subsection{Notations and definitions}\label{sec2-defn}
We collect some notations and definitions. We use standard definitions of Kawamata log terminal (klt, for short) pair and sub-klt pair (for example, see \cite[Section 2.1]{hu2}).

\noindent \textbf{Contractions.}
In this paper a \emph{contraction} refers to a proper morphism $f\colon X\to Y$ of varieties 
such that $f_*\mathcal{O}_X=\mathcal{O}_Y$. In particular, $f$ has connected fibres. Moreover, 
if $X$ is normal, then $Y$ is also normal. %A contraction $f$ is \emph{small} if $f$ does not contract any divisor. 
A birational map $\pi: X \dashrightarrow Y $ is a \emph{birational contraction} if the inverse of $\pi$ does not contract divisors. Note that $\pi$ is not necessarily a morphism unless stated otherwise. 

\noindent \textbf{Very exceptional divisors.}
Let $f : X \rightarrow Y$ be a
dominant morphism from a normal variety to a variety, $D$ a divisor on $X$, and $Z \subset X$ a closed
subset. We say $Z$ is \emph{horizontal} over $Y$ if $f(Z)$ dominates $Y$, and we say $Z$ is \emph{vertical} over $Y$ if $f(Z)$ is a proper subset of $Y$. 

Suppose $f$ is a contraction of normal varieties. Recall that a divisor $D$ is \emph{very exceptional}$/Y$ if $D$ is vertical$/Y$ and for any prime divisor
$P$ on Y there is a prime divisor $Q$ on $X$ which is not a component of $D$ but
$f(Q) = P$, i.e. over the generic point of $P$ we have $\mathrm{Supp} f^\ast P \nsubseteq \mathrm{Supp} D$.

If $\mathrm{codim} f(D) \ge 2$, then $D$ is very exceptional. In this case we say $D$ is \emph{$f$-exceptional}.

%For practical reason, we sometimes use the terminology ``very exceptional" even when $f$ is not necessarily a contraction. More precisely, let $f:X \to Y$ be a proper surjective morphism from a normal variety to a variety, and $X \to \widetilde{Y} \to Y$ be the Stein factorisation. We say a divisor $D$ is \emph{very exceptional} over $Y$ if it is very exceptional over $\widetilde{Y}$.

\noindent \textbf{Generalised pairs.}
For the basic theory of generalised polarised pairs (generalised pairs for short) we refer to \cite[Section 4]{birkarzhang}.
Below we recall some of the main notions and discuss some basic properties.

A \emph{generalised sub-pair} consists of 
\begin{itemize}
	\item a normal variety $X$ equipped with a proper
	morphism $X\to Z$, 
	
	\item an $\R$-divisor $B$ on $X$, and 
	
	\item a b-$\R$-Cartier  b-divisor over $X$ represented 
	by some projective birational morphism $\overline{X} \overset{\phi}\to X$ and $\R$-Cartier divisor
	$\overline{M}$ on $X$ such that $\overline{M}$ is nef$/Z$ and $K_{X}+B+M$ is $\R$-Cartier,
	where $M := \phi_*\overline{M}$.
\end{itemize}
A generalised sub-pair is a \emph{generalised pair} if $B$ is effective. We usually refer to the sub-pair by saying $(X/Z,B+M)$ is a generalised sub-pair \emph{with 
data} $\overline{M}$ or \emph{with the moduli b-divisor} $\mathbf{M}$, where $\mathbf{M}$ is represented by $\overline{M}$. We will use standard definitions of b-divisors, generalised singularities and log minimal models (for example, see \cite[Section 2.1]{hu2}).

\subsection{Iitaka dimension and Iitaka fibration}\label{subsec-iitaka-dim}
In this subsection we introduce the notion of invariant Iitaka dimension and invariant Iitaka fibration. 

Recall the following definitions of Iitaka dimension, which is a birational invariant integer given by the growth of the quantity of sections. 
\begin{defn}[Invariant Iitaka dimension]\label{defn--inv-iitaka-dim}
	Let $X$ be a normal projective variety, and $D$ be an $\mathbb{R}$-Cartier divisor $D$ on $X$. 
	We define the {\em invariant  Iitaka dimension} of $D$, denoted by $\kappa_{\iota}(X,D)$, as follows (see also \cite[Definition 2.5.5]{fujino-book}):  
	If there is an $\mathbb{R}$-divisor $E\geq 0$ such that $D\sim_{\mathbb{R}}E$, set $\kappa_{\iota}(X,D)=\kappa(X,E)$. 
	Here, the right hand side is the usual Iitaka dimension of $E$. 
	Otherwise, we set $\kappa_{\iota}(X,D)=-\infty$. 
	We can check that $\kappa_{\iota}(X,D)$ is well-defined, i.e., when there is $E\geq 0$ such that $D\sim_{\mathbb{R}}E$, the invariant Iitaka dimension $\kappa_{\iota}(X,D)$ does not depend on the choice of $E$. 
	By definition, we have $\kappa_{\iota}(X,D)\geq0$ if and only if $D$ is $\mathbb{R}$-linearly equivalent to an effective $\mathbb{R}$-divisor. 
	
	Let $X\to Z$ be a projective morphism from a normal variety to a variety, and let $D$ be an $\mathbb{R}$-Cartier divisor on $X$. 
	Then the {\em relative invariant Iitaka dimension} of $D$, denoted by $\kappa_{\iota}(X/Z,D)$, is defined by $\kappa_{\iota}(X/Z,D)=\kappa_{\iota}(X,D|_{F})$, where $F$ is a very general fibre (i.e. the fibre over a very general point) of the Stein factorisation of $X\to Z$.
	Note that the value $\kappa_{\iota}(X,D|_{F})$ does not depend on the choice of $F$ (see \cite[Lemma 2.10]{hashizumehu}). 
\end{defn}

For basic properties of the invariant Iitaka dimension, we refer to \cite[Remark 2.8]{hashizumehu}. 

\begin{defn}[Invariant Iitaka fibration]
	Let $X$ be a normal variety projective over $Z$, and $D$ be an $\mathbb{R}$-Cartier divisor on $X$ with $\kappa_{\iota}(X/Z,D) \ge 0$. Pick an $\R$-Cartier divisor $E \ge 0$ such that $D\sim_\R E/Z$. Then there exists a contraction $\phi: X' \to Y$ of smooth varieties such that for all sufficiently large integers $m > 0$, the rational maps $\phi_{m} : X \dashrightarrow Y_m$ given by $f ^*f_*\mathcal{O}_X(\rddown{mE})$ are birationally equivalent to $\phi$, that is, there exists a commutative diagram
	$$
	\xymatrix{
		X \ar@{-->}[d]_{\phi_m}   &  X' \ar[d]^{\phi} \ar[l]_{\pi} &\\
		Y_m  &    Y  \ar@{-->}[l]^{\varphi_m}} 
	$$  
	of rational maps $\phi_m,\varphi_m$ and a contraction $\pi$, where the horizontal maps
	are birational, $\dim Y = \kappa_\iota(X,D)$, and $\kappa(X'/Y,f^*E ) = 0$. Such a fibration is called an \emph{Iitaka fibration} of $D$. It is unique up to birational equivalence.
\end{defn}

\begin{lem}
	The definition above is well-defined and independent of the choice of $E$.
\end{lem}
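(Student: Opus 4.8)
The plan is to show that the birational equivalence class of the contraction $\phi:X'\to Y$ depends only on the $\R$-linear equivalence class of $D/Z$, not on the particular effective representative $E$. First I would fix two effective $\R$-Cartier divisors $E_1,E_2$ with $D\sim_\R E_1/Z$ and $D\sim_\R E_2/Z$; then $E_1\sim_\R E_2/Z$, and after passing to a common resolution we may assume $X$ is smooth and $E_1,E_2$ have simple normal crossing support. Writing $E_1-E_2=\sum r_i (g_i)$ for finitely many rational functions $g_i$ and real numbers $r_i$, the key observation is that the graded section ring $\bigoplus_m f_*\mathcal{O}_X(\lfloor mE_1\rfloor)$ and $\bigoplus_m f_*\mathcal{O}_X(\lfloor mE_2\rfloor)$ have the same Iitaka fibration: this is the standard fact (see e.g. \cite[Section 2.1]{fujino-book} or the discussion around \cite[Remark 2.8]{hashizumehu}) that for $\R$-linearly equivalent effective divisors, the associated Iitaka fibrations coincide up to birational equivalence. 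Indeed, for each $m$ one gets inclusions of sheaves coming from multiplication by suitable monomials in the $g_i$, relating $f_*\mathcal{O}_X(\lfloor mE_1\rfloor)$ and $f_*\mathcal{O}_X(\lfloor (m+c)E_2\rfloor)$ for a bounded correction $c$, which forces the rational maps $\phi_m$ defined by the two systems to agree birationally for $m\gg 0$.

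Next I would check the two numerical outputs recorded in the definition, namely $\dim Y=\kappa_\iota(X,D)$ and $\kappa(X'/Y,f^*E)=0$, are also independent of $E$. The first is immediate since $\kappa_\iota(X/Z,D)$ is itself well-defined by Definition \ref{defn--inv-iitaka-dim} (this uses that $\kappa(X,E_1)=\kappa(X,E_2)$ for $\R$-linearly equivalent effective divisors). For the second, on the fixed birational model $X'$ over $Y$ the pullbacks $\pi^*E_1$ and $\pi^*E_2$ remain $\R$-linearly equivalent over $Y$, hence over a very general fibre of $X'\to Y$ they are $\R$-linearly equivalent effective divisors, so $\kappa(X'/Y,f^*E_1)=\kappa(X'/Y,f^*E_2)$; and the defining property of the Iitaka fibration pins this common value to $0$. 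Existence of some contraction $\phi$ with these properties follows by applying the classical construction of the Iitaka fibration to $\lfloor mE\rfloor$ for the chosen $E$ and large $m$, exactly as in \cite[Section 2.1]{fujino-book}; the relative setting over $Z$ is handled by base change to the Stein factorisation and working fibrewise, since all the data restrict compatibly.

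I expect the main obstacle to be the passage from the integral (Cartier) theory, where the Iitaka fibration is classical, to the genuinely $\R$-divisorial setting: one must control the floors $\lfloor mE\rfloor$ as $m$ varies and show that the finitely generated pieces they cut out stabilise to a single fibration, and that replacing $E$ by an $\R$-linearly equivalent $E'$ only perturbs $\lfloor mE\rfloor$ by a divisor supported on a fixed reduced divisor with bounded coefficients, hence does not change the limiting rational map. Once this boundedness is in place, the uniqueness up to birational equivalence is formal: any two contractions with the stated properties have function fields sandwiched between the same pair of graded rings, so they are birational over $Z$. I would organise the write-up so that this boundedness lemma is isolated first, and then the rest of the argument is a short diagram chase.
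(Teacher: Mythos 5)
Your overall strategy matches the paper's: both rely on Fujino's Diophantine-approximation argument (his Lemma~2.5.6) to get, for $E_1 \sim_\R E_2$ and $m \gg 0$, an inclusion of section spaces $H^0(\mathcal{O}_X(\lfloor m E_1 \rfloor)) \hookrightarrow H^0(\mathcal{O}_X(\lfloor (m+1) E_2 \rfloor))$ after twisting by suitable integral combinations of the $(g_i)$. You also correctly note that the numerical outputs $\dim Y = \kappa_\iota$ and $\kappa(X'/Y, f^*E)=0$ are independent of the representative. Two small omissions relative to the paper: the paper first compactifies $Z$ so that everything is projective, and then invokes \cite[II,3.14]{nakayama} for the basic well-definedness of the Iitaka fibration attached to a single fixed $E$; you should say something to this effect before comparing the two representatives.

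The genuine soft spot is the sentence asserting that the inclusions ``force the rational maps $\phi_m$ defined by the two systems to agree birationally for $m \gg 0$.'' An inclusion of section spaces only gives a \emph{factorization} (the map defined by the smaller system is the map defined by the larger one followed by a linear projection); it does not by itself give a two-sided identification of the fibrations, and upgrading the factorization to a birational equivalence is precisely the nontrivial content here. The paper handles it concretely: fix the Iitaka fibration $\phi'$ for $E'$, restrict the inclusion of sections to a very general fibre $f'^{-1}(y')$ of $\phi'$, observe that the right-hand side is $\simeq k$ there, deduce that $\phi_m$ contracts $f'^{-1}(y')$ to a point, and then apply the rigidity lemma \cite[II,1.12]{nakayama} to produce a birational map $\psi_m : Y' \dashrightarrow Y_m$ with $\phi_m \circ \pi = \psi_m \circ \phi'$. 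If you want to keep the purely ``sandwiching of graded rings'' phrasing, you still need to run this fibrewise-plus-rigidity argument (or an equivalent) to justify the final claim; as written, that step is asserted rather than proved.
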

\begin{proof}
	By compactification, we may assume $Z$ is projective, and hence $X,Y$ projective. The definition is well-defined by \cite[II,3.14]{nakayama}. Let $\phi':X' \to Y'$ be a relative Iitaka fibration over $Z$ associated to an $\R$-Cartier divisor $E'\ge 0$ such that $D \sim_\R E' /Z$. Pick a very general closed point $y' \in Y'$. By \cite[Proof of Lemma 2.5.6]{fujino-book}, for any sufficiently large positive integer $m$, there is an injection
	$$
	H^0(f'^{-1}(y'),\mathcal{O}_X(\rddown{mE|_{f'^{-1}(y')}})) \hookrightarrow 	H^0(f'^{-1}(y'),\mathcal{O}_X(\rddown{(m+1)E'|_{f'^{-1}(y')}}))\simeq k .
	$$
	We infer that the image of $f'^{-1}(y')$ under $\phi_m$ is a point. Therefore, by the rigidity lemma \cite[II,1.12]{nakayama}, $\phi'$ induces a birational map $\psi_m:Y' \dashrightarrow Y_m$ such that $\phi_m \circ \pi=\psi_m \circ \phi'$, which completes the proof.
\end{proof}

\noindent \textbf{Canonical models.}
Recall that, given a a proper morphism $h\colon X\to Z$ from a normal variety to a variety, an $\R$-Cartier divisor $D$ is \emph{semi-ample} over $Z$ if there exist a proper surjective morphism $g: X \to Y$ over $Z$ and an ample$/Z$ divisor $D_Y$ of $Y$ such that $D \sim_\R g^*D_Y$.

\begin{rem}[\text{\cite[Lemma 2.5]{hu2}}]\label{rem-semi-ample-divisor}
	Notation as above, let $D$ be an $\R$-Cartier divisor. 
	\begin{enumerate}
		\item $D$ is semi-ample if and only if $D$ is a convex combination of semi-ample $\Q$-divisors.
		
		\item Let $D'$ be another $\R$-Cartier divisor. If $D,D'$ are semi-ample, then so is $D+D'$.
	\end{enumerate}
\end{rem}

Given an $\R$-linear system $|D/Z|_\R$, we say a divisor $E \ge 0$ is \emph{contained in the fixed part of} $|D/Z|_\R$ if, for every $B \in |p^*D/Z|_\R$, then $B \ge E$.

\begin{defn}[\text{\cite[Definitions 3.6.5 and 3.6.7]{bchm}}]
	Let $h: X \to Z$ be a projective morphism of normal quasi-projective varieties and let $D$ be a $\R$-Cartier divisor on $X$. 
	\begin{enumerate}
		\item We say that a birational contraction $f : X \dashrightarrow X'$ over $Z$ is a \emph{semi-ample model} of $D$ over $Z$, if $f$ is $D$-non-positive, $X'$ is normal and projective over $Z$ and $D' = f_*D$ is semi-ample over $Z$.
		
		\item We say that $g : X \dashrightarrow Y$ is the \emph{ample model} of $D$ over $Z$, if $g$ is
		a rational map over $Z$, $Y$ is normal and projective over $Z$ and there
		is an ample divisor $H$ over $Z$ on $Y$ such that if $p: W \to X$ and
		$q : W\to  Y$ resolve $g$ then $q$ is a contraction morphism and we may
		write $p^*D \sim_\R q^* H + E/Z$, where $E \ge 0$ is contained in the fixed part of $ |p^*D/Z|_\R$. By \cite[Lemma 3.6.6]{bchm}, the ample model is unique up to isomorphism.
		
		\item (Canonical model.) If $(X,B)$ is a klt pair and $D=K_X+B$, then we say $Y$ is the \emph{canonical model} of $(X,B)$ over $Z$.
	\end{enumerate}
\end{defn}

\subsection{Klt-trivial fibrations}\label{subsec-lc-trivial-fib}
Recall that the discrepancy b-divisor $\mathbf{A} = \mathbf{A}(X, B)$
of a pair $(X, B)$ is the b-divisor of $X$ with the trace $ \mathbf{A}_Y$ defined by
the formula
$$
K_Y = f^*(K_X + B) +  \mathbf{A}_Y ,
$$
where $f : Y \to X$ is a proper birational morphism of normal varieties. By the definition, we have $\mathcal{O}_X (\lceil \mathbf{A}(X, B)\rceil) = \mathcal{O}_X$ when $(X, B)$
is klt (see
\cite[Lemma 3.19]{fujino-abund-saturation}).

\begin{defn}[\text{\cite[Definition 2.21]{hu2}}]\label{defn-Q-lc-trivial-fib}
	Let $\K=\Q$ or $\R$. A \emph{$\K$-klt-trivial fibration} $f : (X, B) \to
	Y$ consists of a contraction $f : X \to Y$ of normal varieties and a sub-pair $(X, B)$ satisfying the
	following properties:
	
	(1) $(X, B)$ is sub-klt over the generic point of Y;
	
	(2) $\mathrm{rank} f_*\mathcal{O}_X(\lceil  \mathbf{A}
	(X, B)\rceil) = 1$;
	
	(3) There exists an $\R$-Cartier divisor $D$ on $Y$ such that
	$$
	K_X + B \sim_\K f^*D.
	$$
\end{defn}
Notation as above, we set
\begin{align*}
b_P = \max \{ t\in \R| \text{$(X, B + tf^*P)$ is sub-lc over
	the generic point of $P$} \}
\end{align*}
and set
$$
B_Y =\sum_P (1 - b_P )P,
$$
where $P$ runs over prime divisors on $Y$. Then it is easy to see that $B_Y$ is well defined since $b_P = 1$ for all but a finite number of prime divisors and it is called the \emph{discriminant divisor}. Furthermore, we set
$$
M_Y = D- K_Y-B_Y
$$
and call $M_Y$ the \emph{moduli divisor}. Note that if $\K=\Q$, thanks to the important result \cite[Theorem 2.5]{ambro1} obtained by the theory of variations of Hodge structure, the moduli b-divisor $\mathbf{M}$ of a $\Q$-klt-trivial fibration is $\Q$-b-Cartier and b-nef. Hence $\mathbf{K}+\mathbf{B}$ is $\R$-b-Cartier.

The arguments for next lemma are taken from \cite{hu2}. 
\begin{lem}[\text{\cite[Lemma 2.22]{hu2}}]\label{lem-Q-lc-trivial-fib}
	Let $f:(X,B) \to Y$ be an $\R$-klt-trivial fibration. Then, $B$ is a convex combination of $\Q$-divisors $B_i$ such that $f:(X,B_i) \to Y$ is $\Q$-klt-trivial. Moreover, if $(X,B)$ is sub-klt, then we can choose $B_i$ so that $(X,B_i)$ is sub-klt for each $i$.
\end{lem}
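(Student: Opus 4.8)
The plan is to write $B = K_X + B - K_X$ in a way that separates the "moving in families of $\Q$-divisors" from the rigidity coming from condition (3). Concretely, since $K_X + B \sim_\R f^*D$, we can fix an $\R$-divisor $E \ge 0$ (or more precisely an $\R$-Cartier divisor, possibly not effective on the source) realizing this $\R$-linear equivalence, so that $K_X + B - f^*D = \sum_j r_j \operatorname{div}(\phi_j)$ for finitely many rational functions $\phi_j$ and real numbers $r_j$. The idea is then to view the defining data of the $\R$-klt-trivial fibration — the coefficients of $B$, the real numbers $r_j$, and the $\R$-coefficients needed to express $D$ and the divisor $f^*D - \sum r_j\operatorname{div}(\phi_j)$ in terms of a fixed finite set of prime divisors on $X$ — as a point in a finite-dimensional rational affine subspace $V \subseteq \operatorname{WDiv}_\R(X) \times \R^N$ cut out by the linear conditions that force $K_X + B$ to be $\R$-linearly equivalent over $Y$ to a pullback. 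Rational points are dense in $V$, and our given $B$ sits in the interior of a rational polytope inside $V$ on which the klt (and sub-klt) condition is open; hence $B$ is a convex combination of finitely many rational points $B_i$ of that polytope, each of which still satisfies conditions (1) and (3) with $\K = \Q$.

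For condition (2), note that $\operatorname{rank} f_*\mathcal O_X(\lceil \mathbf A(X,B)\rceil) = 1$ is automatic once $(X, B_i)$ is sub-klt over the generic point of $Y$: indeed for a sub-klt pair $\mathcal O_X(\lceil \mathbf A(X,B_i)\rceil) = \mathcal O_X$ near the generic fibre by the remark recalled just before Definition \ref{defn-Q-lc-trivial-fib} (citing \cite[Lemma 3.19]{fujino-abund-saturation}), and since $f$ is a contraction, $f_*\mathcal O_X = \mathcal O_Y$, so the rank is $1$. Even in the merely sub-klt-over-the-generic-point case, the divisor $\lceil \mathbf A(X,B_i)\rceil$ is effective and $f$-exceptional over the generic point of $Y$, so pushing forward still gives rank $1$; this is the same argument that works for the original $(X,B)$, applied uniformly on the polytope. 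So the only conditions to propagate are sub-klt (or klt) along the segment and the $\Q$-linear-triviality, i.e. $K_X + B_i \sim_\Q f^*D_i$ for suitable $\Q$-Cartier $D_i$ on $Y$.

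The two steps to carry out carefully are: first, setting up the affine space $V$ so that the relation $K_X + B \sim_\R f^*D$ becomes a system of $\Q$-linear equations in the chosen coordinates — this uses that $\R$-linear equivalence is detected, after fixing the rational functions $\phi_j$ and a big enough finite set $S$ of prime divisors containing all relevant components, by rational linear relations among the coefficients, so the fibre of "(coefficients of $B$) $\mapsto$ whether $K_X+B\sim_\R (\text{pullback})$" is rationally defined; and second, checking that we may simultaneously arrange $B_i \ge 0$ for the "moreover"-free part of the statement while keeping $B_i$ only sub-klt in the "moreover" part — this is just a matter of choosing the polytope inside $\{B' : B' \ge 0\}$ or not, respectively, and is where one invokes that $B$ (resp. the sub-pair) lies in the interior so a small rational simplex around it stays in the relevant open locus. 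I expect the main obstacle to be purely bookkeeping: verifying that the $\Q$-b-Cartier/b-nef conclusions and the finiteness of the prime divisors involved (so that $V$ is genuinely finite-dimensional and the polytope genuinely rational) hold, and that condition (3)'s target divisor $D_i$ can be taken $\Q$-Cartier on $Y$ — this follows because $D_i$ is forced to be $\Q$-linearly equivalent to $(K_X+B_i)$ pushed forward through the contraction, and $K_X+B_i$ is $\R$-Cartier with rational coefficients in our coordinates, but one should spell out that $f_*(K_X+B_i)$ descends. Since the paper says the argument is taken from \cite{hu2}, I would follow the standard "perturbation in a rational polytope" technique of \cite[3.2]{bchm}, adapted to the klt-trivial-fibration setting as in \cite[Section 2]{hu2}.
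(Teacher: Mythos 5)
The overall polytope/perturbation framework in your proposal matches the paper's: both fix an $\R$-rational function realizing $K_X+B\sim_\R f^*D$, work inside a finite-dimensional rational affine space of boundary divisors, and extract a rational polytope $\mathcal P$ on which $K_X+\Delta\sim_\R 0/Y$ still holds and on which the sub-klt-over-the-generic-point condition is preserved. That part of your argument is sound and essentially what the paper does.

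The gap is in how you handle condition (2) of Definition~\ref{defn-Q-lc-trivial-fib}, namely $\mathrm{rank}\, f_*\mathcal O_X(\lceil \mathbf A(X,B_i)\rceil)=1$. You claim this is automatic once $(X,B_i)$ is sub-klt over the generic point of $Y$, invoking the remark before Definition~\ref{defn-Q-lc-trivial-fib}. But that remark (and \cite[Lemma 3.19]{fujino-abund-saturation}) requires $(X,B)$ to be \emph{klt}, in particular $B\ge 0$; for a sub-klt pair the trace $\lceil\mathbf A(X,B_i)\rceil_X=\lceil -B_i\rceil$ can have nontrivial \emph{horizontal} components wherever $B_i$ has negative coefficients, and the pushforward then need not have rank one. (A toy example: a product fibration with $B=-P$ for $P$ a horizontal prime divisor gives $f_*\mathcal O_X(P)$ of rank two.) Condition (2) is therefore a genuine extra hypothesis, not a consequence of (1), and it is also not preserved under small perturbation: the round-up $\lceil\cdot\rceil$ is discontinuous, so moving $\Delta$ inside $\mathcal P$ can make $\lceil\mathbf A(X,\Delta)\rceil$ jump up on some model. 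The paper's proof addresses exactly this by fixing a log resolution $\overline X\to X$ of a divisor supporting all of $\mathcal P$ and cutting down to the further rational sub-polytope $\mathcal Q=\{\Delta\in\mathcal P : \lceil\mathbf A(X,\Delta)_{\overline X}\rceil \le \lceil\mathbf A(X,B)_{\overline X}\rceil\}$, on which the relevant pushforward sheaf is sandwiched by that of $B$ and hence still has rank one (after shrinking $Y$, using the proofs of \cite[Lemmas 3.19--3.20]{fujino-abund-saturation} adapted to $\R$-sub-boundaries). Without this step your argument does not establish (2) for the $\Q$-divisors $B_i$, so the proof is incomplete.
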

\begin{proof}
	Replacing $X$ we may assume it is smooth. Let $f:(X,B) \to Y$ be an $\R$-klt-trivial fibration, $\varphi=\prod_{i=1}^k \varphi_i^{\alpha_i}$ be an $\R$-rational function so that $K_X+B+(\varphi)=f^*D$. Let $\mathcal{V} \subset \CDiv_\R(Y) $ be a finite dimensional rational linear subspace containing $D$, $\mathcal{L}\subset \CDiv_\R(X)$ be a rational polytope containing $B$ such that, for every $\Delta \in \mathcal{L}$, we have $(X,\Delta)$ is a sub-pair which is sub-klt over the generic point of $Y$. Now we consider the rational polytope 
	$$
	\mathcal{P}:=\{\Delta \in \mathcal{L}|\Delta +\sum_{i=1}^k \R(\varphi_i)\text{ intersects }f^*\mathcal{V} \}
	$$
	For every $\Delta \in \mathcal{P}$, we have further $K_X+\Delta \sim_\R 0/Y$. It is obvious that $B \in \mathcal{P}$.
	
	It suffices to show that, there exists a convex combination $B=\sum_j r_jB_j$ of $\Q$-divisors $B_j \in \mathcal{P}$ with $\mathrm{rank} f_*\mathcal{O}_X(\lceil  \mathbf{A}
	(X, B_j)\rceil) = 1$. To this end, pick a log resolution $\pi: \overline{X} \to X$ of $(X, \sum_j \Gamma_j  )$ where every element of $\mathcal{P}$ is supported by $\sum_j \Gamma_j$. Note that the proofs of \cite[Lemmas 3.19 and 3.20]{fujino-abund-saturation} are still valid for $\R$-sub-boundaries. Hence, by shrinking $Y$, we may assume $(X,\Delta)$ is sub-klt for every $\Delta \in \mathcal{P}$, and we have $$
	f_*\mathcal{O}_X(\lceil  \mathbf{A}
	(X, \Delta)\rceil)=f_* \pi_* \mathcal{O}_{\overline{X}}(\sum \lceil a_i  \rceil A_i)$$
	where $K_{\overline{X}}=\pi^*(K_X+\Delta) +\sum a_i A_i$. 
	Consider the rational sub-polytope 
	$$\mathcal{Q}=\{\Delta \in \mathcal{P}| \lceil  \mathbf{A}
	(X, \Delta)_{\overline{X}}\rceil \le \lceil  \mathbf{A}
	(X, B)_{\overline{X}}\rceil \}.$$ Then, for any $B_j \in \mathcal{Q}$, we have $\mathrm{rank} f_*\mathcal{O}_X(\lceil  \mathbf{A}
	(X, B_j)\rceil) = 1$ which completes the first assertion. The last statement is obvious.
\end{proof}

\begin{lem}\label{lem-R-klt-trivial}
	Let $f:(X,B) \to Y$ be an $\R$-klt-trivial fibration from a sub-klt pair, $B_Y$ be the discriminant divisor and $M_Y$ be the moduli divisor. Then, there exists a b-divisor $\mathbf{M}$ satisfying:
	\begin{enumerate}
		\item The trace $\mathbf{M}_Y=M_Y$.
		
		\item $(Y,B_Y+M_Y)$ is a g-sub-klt generalised pair with the moduli b-divisor $\mathbf{M}$.
	\end{enumerate}
\end{lem}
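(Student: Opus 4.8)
The strategy is to reduce to the case $\K=\Q$ via Lemma \ref{lem-Q-lc-trivial-fib} and to take $\mathbf{M}$ to be a convex combination of the moduli b-divisors of the associated $\Q$-klt-trivial fibrations. The one delicate point is that the discriminant divisor is not linear in the boundary — it is only concave — so the decomposition $B=\sum_i r_iB_i$ must be chosen compatibly with the formation of discriminants.

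So first I would, after replacing $X$ by a suitable log resolution, build as in the proof of Lemma \ref{lem-Q-lc-trivial-fib} a rational polytope $\mathcal{P}$ of sub-boundaries $\Delta$ containing $B$ such that for every $\Delta\in\mathcal{P}$: $(X,\Delta)$ is sub-klt, $K_X+\Delta\sim_\R 0/Y$ with the induced divisor $D_\Delta$ on $Y$ (i.e. $K_X+\Delta\sim_\R f^*D_\Delta$) depending affinely on $\Delta$, and $\operatorname{rank}f_*\mathcal{O}_X(\lceil\mathbf{A}(X,\Delta)\rceil)=1$. Next I would note that only finitely many prime divisors $P$ on $Y$ can have $b_P(\Delta)\ne 1$ for some $\Delta\in\mathcal{P}$ — namely those $P$ such that either some prime divisor $Q$ of $X$ with $f(Q)=P$ is a component of some $\Delta\in\mathcal{P}$, or $f^*P$ has a component of multiplicity $\ge 2$ — and that for each such $P$ the function $\Delta\mapsto b_P(\Delta)$ is a minimum of finitely many affine functions of $\Delta$, hence concave and piecewise affine on $\mathcal{P}$. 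Passing to the common polyhedral refinement of these subdivisions and then to a small rational sub-polytope $\mathcal{P}_0\ni B$ on which all these $b_P$, hence also $\Delta\mapsto B_Y(\Delta)=\sum_P(1-b_P(\Delta))P$, are affine and on which sub-klt-ness and the rank condition still hold at every point, I obtain a decomposition $B=\sum_i r_iB_i$ with $r_i>0$, $\sum_i r_i=1$, each $f:(X,B_i)\to Y$ a $\Q$-klt-trivial fibration from a sub-klt pair, and — by affineness — $B_Y=\sum_i r_iB_Y^i$ and $D=\sum_i r_iD_i$ (where $K_X+B_i\sim_\Q f^*D_i$), hence also $M_Y=\sum_i r_iM_Y^i$.

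Then for each $i$ the $\Q$-case applies: the moduli b-divisor $\mathbf{M}^i$ is $\Q$-b-Cartier and b-nef by \cite[Theorem 2.5]{ambro1}, and, since $(X,B_i)$ is globally sub-klt, $(Y,B_Y^i+M_Y^i)$ is a g-sub-klt generalised sub-pair with moduli b-divisor $\mathbf{M}^i$ (this last being part of the theory of klt-trivial fibrations). Set $\mathbf{M}:=\sum_i r_i\mathbf{M}^i$. Choosing a model to which all $\mathbf{M}^i$ descend, $\mathbf{M}$ is a nonnegative convex combination of $\Q$-b-Cartier b-nef b-divisors, hence $\R$-b-Cartier and b-nef; its trace is $\mathbf{M}_Y=\sum_i r_iM_Y^i=M_Y$, which gives (1). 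Moreover $K_Y+B_Y+M_Y=\sum_i r_i(K_Y+B_Y^i+M_Y^i)=\sum_i r_iD_i=D$ is $\R$-Cartier, so $(Y,B_Y+M_Y)$ is a generalised sub-pair with moduli b-divisor $\mathbf{M}$. Finally $(Y,B_Y+M_Y)$ is the convex combination, with the same weights, of the generalised sub-pairs $(Y,B_Y^i+M_Y^i)$: on a common log resolution on which all $\mathbf{M}^i$ (hence $\mathbf{M}$) descend, the boundary part of the crepant pullback of $K_Y+B_Y+M_Y$ equals the corresponding convex combination of the boundary parts of the $(Y,B_Y^i+M_Y^i)$, whose coefficients are all $<1$; hence the boundary part of $(Y,B_Y+M_Y)$ has coefficients $<1$, i.e. $(Y,B_Y+M_Y)$ is g-sub-klt, which is (2).

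The main obstacle is the construction in the second paragraph of a decomposition with $B_Y=\sum_i r_iB_Y^i$: a naive decomposition only yields the inequality $B_Y\le\sum_i r_iB_Y^i$, coming from concavity of the minimum, which is not enough to identify the trace of $\mathbf{M}$ with the prescribed $M_Y$. A secondary point is that the $\Q$-coefficient statement — in particular the g-sub-klt-ness of the base generalised pair of a sub-klt $\Q$-klt-trivial fibration — must be invoked; this is classical in the theory of klt-trivial fibrations.
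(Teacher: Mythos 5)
Your proposal is correct and follows essentially the same route as the paper: decompose $B$ via Lemma~\ref{lem-Q-lc-trivial-fib}, pass to a rational sub-polytope containing $B$ on which every $b_P$ is affine (using that each $b_P$ is concave piecewise affine, hence a minimum of finitely many affine functions after a log resolution), and define $\mathbf{M}$ as the corresponding convex combination of the $\Q$-klt-trivial moduli b-divisors, concluding with \cite[Theorem 2.5]{ambro1}. You supply a bit more detail than the paper does at two points the paper leaves implicit — why $b_P$ is piecewise affine, and the direct verification of g-sub-klt-ness via boundary coefficients on a common log resolution — but the strategy and key steps coincide.
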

\begin{proof}
	Replacing $X$, we may assume it is smooth. By Lemma \ref{lem-Q-lc-trivial-fib}, there exists a convex combination of $B=\sum_i r_i B_i$ of $\Q$-divisors such that $f:(X,B_i) \to Y$ is $\Q$-klt-trivial. Let $\mathcal{P} \subset \CDiv_\R(X)$ be the polytope defined by $B_i$'s. For any prime divisor $P$ on $Y$, we set the function $b_P$ on $\mathcal{P}$: 
	\begin{align*}
	b_P(\Delta) = \max \{ t\in \R| \text{$(X, \Delta + tf^*P)$ is sub-lc over
		the generic point of $P$} \}.
	\end{align*}
	We note that the $b_P$ is piecewisely affine and gives a rational polyhedral decomposition of $\mathcal{P}$. Also note that there are only finitely many $P$ such that $b_P$ is not identically one on $\mathcal{P}$. Therefore, there exists a rational sub-polytope $\mathcal{Q}$ containing $B$ such that $b_P$ is affine on $\mathcal{Q}$, for any prime divisor $P$. In particular, replacing $B_i$'s and $r_i$'s, we have $B_Y=\sum_i r_i B_{Y,i}$ and $M_Y=\sum_i r_i M_{Y,i}$, where $B_Y,B_{Y,i}$ are discriminant divisors and $M_Y,M_{Y,i}$ are moduli divisors of $f:(X,B) \to Y,f:(X,B_i)\to Y$ respectively. Letting $\mathbf{M}=\sum_i r_i \mathbf{M}_i$, where $\mathbf{M}_i$ is the moduli b-divisor of $f:(X,B_i) \to Y$ for each $i$, we conclude the lemma by \cite[Theorem 2.5]{ambro1}.
\end{proof}

\begin{lem}\label{lem-rank}
	Let $f : (X, B) \to
	Y$ be a contraction of normal varieties from a klt pair $(X, B)$. Suppose $K_X+B \sim_\R R/Y$ where $R \ge 0$, and $\kappa(X/Y,R)=0$, then $$\mathrm{rank} f_*\mathcal{O}_X(\lceil  \mathbf{A}
	(X, B-R)\rceil) = 1.$$
\end{lem}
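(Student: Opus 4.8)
The plan is to reduce the statement to the computation of $h^0$ of a line bundle on a general fibre, after first replacing $X$ by a log resolution so that $\lceil\mathbf{A}(X,B-R)\rceil$ becomes an honest divisor. I would choose a log resolution $\mu\colon W\to X$ of $(X,B+R)$ that is high enough that $\mathcal{O}_X(\lceil\mathbf{A}(X,B-R)\rceil)=\mu_*\mathcal{O}_W(\lceil\mathbf{A}(X,B-R)_W\rceil)$; such $W$ exists by \cite[Lemmas 3.19 and 3.20]{fujino-abund-saturation}, which remain valid for $\R$-sub-boundaries as noted in the proof of Lemma \ref{lem-Q-lc-trivial-fib}. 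Writing $K_W+B_W=\mu^*(K_X+B)+E_W$ with $B_W,E_W\ge 0$ having no common component, the pair $(W,B_W)$ is log smooth and $B_W$ has coefficients in $[0,1)$ because $(X,B)$ is klt; setting $R_W:=\mu^*R+E_W\ge 0$, a direct computation gives $\mathbf{A}(X,B-R)_W=K_W-\mu^*(K_X+B-R)=R_W-B_W$. Hence $f_*\mathcal{O}_X(\lceil\mathbf{A}(X,B-R)\rceil)=(f\mu)_*\mathcal{O}_W(\lceil R_W-B_W\rceil)$, whose generic rank equals $h^0(F,\mathcal{O}_F(\lceil R_W-B_W\rceil|_F))$ for $F$ a general fibre of $f\mu\colon W\to Y$ (which is projective, being a fibre of a proper morphism). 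On such an $F$ the divisor $B_F:=B_W|_F$ has coefficients in $[0,1)$, $R_F:=R_W|_F\ge 0$, and $\kappa(F,R_F)=\kappa(X/Y,R)=0$: indeed $R_F$ is the sum of the pullback of $R$ restricted to the corresponding fibre of $X$ and an effective divisor exceptional over that fibre, and such a modification does not change $\kappa$. Thus it suffices to prove $h^0(F,\mathcal{O}_F(N))=1$, where $N:=\lceil R_F-B_F\rceil$.

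The key observation is that $N$ is effective with $\Supp N\subseteq\Supp R_F$. At any prime divisor $P$ with $\mult_P R_F=0$ one has $\mult_P N=\lceil-\mult_P B_F\rceil=0$ since $\mult_P B_F\in[0,1)$; in particular $N\ge 0$ (if $\mult_P R_F>0$ then $\mult_P(R_F-B_F)>-1$, so its round-up is $\ge 0$) and no prime outside $\Supp R_F$ occurs in $N$. This is exactly the step that uses the klt hypothesis, through the fact that the coefficients of $B_F$ are strictly less than $1$. Since $R_F$ is strictly positive along every component of $N$, there is a positive integer $m_0$ with $N\le m_0R_F$.

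From $N\le m_0R_F$ one obtains an injection $\mathcal{O}_F(mN)\hookrightarrow\mathcal{O}_F(\lfloor mm_0R_F\rfloor)$ for every $m\ge 1$, whence $\kappa(F,N)\le\kappa(F,R_F)=0$; since $N\ge 0$ this forces $\kappa(F,N)=0$. On the other hand, if $h^0(F,\mathcal{O}_F(N))\ge 2$, I would pick linearly independent sections $s_0,s_1$; then $s_1/s_0$ is a non-constant rational function, hence transcendental over the ground field, so the sections $s_0^{m-i}s_1^{i}$ for $0\le i\le m$ are linearly independent in $H^0(F,\mathcal{O}_F(mN))$, giving $h^0(F,\mathcal{O}_F(mN))\ge m+1$ and $\kappa(F,N)\ge 1$ — a contradiction. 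Therefore $h^0(F,\mathcal{O}_F(N))\le 1$, and it is $\ge 1$ because $N\ge 0$; this proves the lemma.

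I expect the only delicate point to be the bookkeeping in the reduction step: the stabilisation of $\mathcal{O}_X(\lceil\mathbf{A}(X,B-R)\rceil)$ on a sufficiently high model, and the fact that the formation of $(f\mu)_*\mathcal{O}_W(\lceil R_W-B_W\rceil)$ commutes with restriction to a general fibre of $Y$. The genuinely new content is the trivial-looking inclusion $\Supp N\subseteq\Supp R_F$, which converts the hypothesis $\kappa(X/Y,R)=0$ into $\kappa(F,N)=0$, after which the rank-one conclusion is automatic.
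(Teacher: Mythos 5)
Your proof is correct and follows essentially the same route as the paper's. Both arguments pass to a log resolution, compute $\mathbf{A}(X,B-R)$ there, observe that $\lceil\mathbf{A}(X,B-R)\rceil$ is effective with support contained in $\Supp\mu^*R\cup\Exc(\mu)$ (which is where the klt hypothesis enters, exactly as you note), and then deduce the rank-one conclusion from $\kappa(X/Y,R)=0$; you merely restrict to a general fibre earlier and spell out the elementary $\kappa=0\Rightarrow h^{0}\le 1$ step, which the paper leaves implicit.
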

\begin{proof}
	Let $\pi:X' \to X$ be a log resolution of $(X,B)$ and write $\Delta=B-R$ and $K_{X'}=\pi^*(K_X+\Delta)+ \sum_i a_i A_i$. By \cite[Proof of Lemmas 3.19 and 3.20]{fujino-abund-saturation}, we have $ f_*\mathcal{O}_X(\lceil  \mathbf{A}
	(X, \Delta)\rceil) =(f \circ \pi)_*\mathcal{O}_X(\sum_i \lceil a_i \rceil A_i )$. Because we have
	$$\Supp \sum_i \lceil a_i \rceil A_i \subseteq \Supp \pi^*R \bigcup \ex (\pi),$$ we deduce $\kappa (X'/Y,\sum_i \lceil a_i \rceil A_i )=0$ and hence the lemma.
\end{proof}

\section{Existence of canonical models}

\begin{lem}\label{lem-ample-model}
	Let $f:X \to Y$ be a contraction of normal varieties over $Z$, $D$ be an $\R$-Cartier divisor on $X$ and $D_Y$ be an $\R$-Cartier divisor on $Y$. We suppose that:
	\begin{itemize}
		\item $D \sim_\R f^*D_Y+E/Z$ for some divisor $E \ge 0$, such that $\kappa(X/Y,E^h)=0$ and $E^v$ is very exceptional$/Y$, where $E^h$ (resp. $E^v$) denotes the horizontal (resp. vertical) part over $Y$.
		
		\item There is a semi-ample model of $D_Y/Z$.
	\end{itemize}
Then, there exists the ample model of $D/Z$.
\end{lem}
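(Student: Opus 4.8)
The plan is to write down the ample model of $D/Z$ directly, as the composite $X\to Y\bir T$, where $T$ is read off from the semi-ample model of $D_Y/Z$. By hypothesis there is a semi-ample model $Y\bir Y'$ of $D_Y/Z$; letting $D_{Y'}$ be the push-forward of $D_Y$, semi-ampleness over $Z$ gives a contraction $h\colon Y'\to T$ over $Z$ and an ample$/Z$ divisor $H$ on $T$ with $D_{Y'}\sim_\R h^*H/Z$. Take a common resolution $p\colon W\to X$ which also dominates a resolution $W_Y$ of $Y\bir Y'$, so one has morphisms $W\to Y$, $r\colon W\to Y'$ and $q:=h\circ r\colon W\to T$; after enlarging $W$ we may assume $q$ is a contraction morphism. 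Pulling back the relation $(W_Y\to Y)^*D_Y=(W_Y\to Y')^*D_{Y'}+F_0$ (with $F_0\ge 0$ exceptional$/Y'$) expressing the $D_Y$-non-positivity of $Y\bir Y'$, and then using $D_{Y'}\sim_\R h^*H/Z$, we obtain $(W\to Y)^*D_Y\sim_\R q^*H+F/Z$ with $F\ge 0$ exceptional$/Y'$; combined with $p^*D\sim_\R (W\to Y)^*D_Y+p^*E/Z$ this gives $p^*D\sim_\R q^*H+G/Z$ with $G:=F+p^*E\ge 0$. As $q$ is a contraction morphism resolving $X\bir T$, by the definition of the ample model it suffices to show that $G$ is contained in the fixed part of $|p^*D/Z|_\R$.

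This reduces everything to the following, which I would prove as a separate lemma and then apply with $r\colon W\to Y'$ in the role of $f\colon X\to Y$: \emph{if $f\colon X\to Y$ is a contraction, $D\sim_\R f^*D_Y+E/Z$ with $E\ge 0$, $\kappa(X/Y,E^h)=0$ and $E^v$ very exceptional$/Y$, then $B\ge E$ for every effective $B$ with $B\sim_\R D/Z$} (i.e.\ $E$ lies in the fixed part of $|D/Z|_\R$). To prove it, restrict to the generic fibre $X_\eta$ of $f$: there $f^*D_Y$ and $E^v$ vanish, so $B|_{X_\eta}\sim_\R E^h|_{X_\eta}$, where both divisors are effective and have invariant Iitaka dimension $\kappa_\iota(X_\eta,\cdot)=\kappa(X/Y,E^h)=0$. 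By uniqueness of the effective member of an $\R$-linear equivalence class of invariant Iitaka dimension zero (cf.\ \cite{nakayama}, \cite{hashizumehu}), $B|_{X_\eta}=E^h|_{X_\eta}$; hence the horizontal-over-$Y$ parts of $B$ and $E$ agree, and $B=E^h+B^v$ with $B^v\ge 0$ vertical$/Y$. Then $B^v-E^v=B-E\sim_\R f^*D_Y/Z$ is vertical$/Y$ and $\R$-linearly equivalent over $Z$ to a divisor pulled back from $Y$; in particular $B^v-E^v\sim_\R 0/Y$. Decomposing $B^v-E^v=P-N$ into positive and negative parts, $N\le E^v$ is very exceptional$/Y$, and the negativity lemma for very exceptional divisors (cf.\ \cite{hu2}) forces $N=0$; therefore $B^v\ge E^v$ and $B\ge E$.

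To apply the lemma to $r\colon W\to Y'$ and $G$, one checks its two hypotheses. Since $Y\bir Y'$ is birational, the generic fibres of $W\to Y$ and $W\to Y'$ coincide, so $G$ and $p^*E$ have the same horizontal part over $Y'$, of relative Iitaka dimension $\kappa(X/Y,E^h)=0$; the vertical part of $G$ over $Y'$ is $F$ plus the vertical part of $p^*E$, where $F$ is $r$-exceptional and the very-exceptionality of $E^v$ over $Y$ is carried — through the birational contraction $Y\bir Y'$ and the birational morphism $p$ — to very-exceptionality over $Y'$. Granting these transports, the lemma yields $B\ge G$ for every effective $B\sim_\R p^*D/Z$, i.e.\ $G$ lies in the fixed part of $|p^*D/Z|_\R$, so $X\bir T$ is the ample model of $D/Z$; its uniqueness is \cite[Lemma 3.6.6]{bchm}.

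The main obstacle is the separate lemma, and inside it two ingredients: (i) the uniqueness of the effective representative of an $\R$-linear class of invariant Iitaka dimension zero, used on the generic fibre to kill the horizontal difference between $B$ and $E$; and (ii) the negativity lemma for very exceptional divisors, used on the vertical part — this is where the hypothesis "$E^v$ very exceptional$/Y$" is genuinely consumed. The remaining points (that membership in the fixed part descends along the birational morphism $p$, that $q$ may be taken to be a contraction, and that very-exceptionality transports along $Y\bir Y'$) are routine but must be handled with care.
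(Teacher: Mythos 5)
Your proposal is correct and follows essentially the same route as the paper: reduce to a resolution dominating (the ample/semi-ample model of) $D_Y$, then show the excess effective divisor lies in the fixed part of the $\R$-linear system by splitting it into a horizontal part (handled via $\kappa(X/Y,E^h)=0$ and uniqueness of the effective representative on the general fibre) and a vertical part (handled via the negativity lemma for very exceptional divisors, which the paper cites as \cite[Lemma 3.3]{birkar-flip}). The only cosmetic differences are that you pass directly to the ample model $T$ of $D_Y$ and package the fixed-part claim as a standalone lemma, whereas the paper first reduces to the case $D_Y$ semi-ample$/Z$ and argues inline.
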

\begin{proof}
	We first reduce the lemma to the case $D_Y$ is semi-ample$/Z$. Let $\varphi:Y \dashrightarrow Y'$ be the birational contraction to a semi-ample model of $D_Y/Z$, and $p:\overline{Y}\to Y$ and $q:\overline{Y} \to Y'$ which resolve $\varphi$. We write $D_{Y'}$ for the birational transform of $D_Y$ and $p^*D_Y=q^*D_{Y'}+F$ where $F \ge 0$ is exceptional$/Y'$. Pick a resolution $\pi: \overline{X} \to X$ such that the induced map $\overline{f}:\overline{X} \dashrightarrow \overline{Y}$ is a morphism. We write $\overline{D}=\pi^*D,\overline{E}=\pi^*E+\overline{f}^*F$, and $\overline{D} \sim_\R (q \circ \overline{f})^*D_{Y'}+\overline{E}$. If we denote by $\overline{E}^h$ and $\overline{E}^v$ the horizontal and vertical part over $Y'$, then one can easily verify that $\kappa(\overline{X}/Y',\overline{E}^h)=0$, and $\overline{E}^v=\pi^*E^v+\overline{f}^*F$ is very exceptional$/Y'$. Replacing $X,Y$ with $\overline{X},Y'$ and the other data accordingly, we may assume $D_Y$ is semi-ample$/Z$.
	
	It remains to check that $E$ is contained in the fixed part of $|D/Z|_\R$. To this end, pick any $D'\sim_\R D/Z$. Since $D'|_F \sim_\R D|_F$ where $F$ is a general fbre of $f$, we have $E^h$ is contained in the fixed part of $|D/Z|_\R$. Replacing $D$ with $D-E^h$, we may assume $E$ is vertical and very exceptional$/Y$. Hence, the lemma follows from the Negativity lemma \cite[Lemma 3.3]{birkar-flip}.  
\end{proof}

\begin{rem}
	The lemma above also holds when $f$ is a proper surjetive morphism instead of a contraction. %This is an easy corollary from Remark \ref{rem-semi-ample-divisor}.
\end{rem}

\begin{proof}[Proof of Theorem \ref{thm-canonical-bundle-formula}]
	Since $\kappa_{\iota}(X/Y,K_X+B)=0$, by \cite[Lemma 2.10]{hashizumehu}, there exists an $\R$-Cartier divisor $D\ge 0$ such that $K_X+B \sim_\R D/Y$. Applying \cite[Theorem 2.1, Proposition 4.4]{ak}, there exist birational models $\pi:(X',\Delta') \to X$, $\phi:(Y',\Delta_{Y'}) \to Y$ such that the induced morphism $f':(X',\Delta') \to (Y',\Delta_{Y'})$ is toroidal and equidimensional to a log smooth pair. Moreover, writing $K_{X'}+B'=\pi^*(K_X+B)+E$ as in (1), by \cite[Theorem 1.1]{adk}, we have $B' \le \Delta'$ and $\Supp D' \subseteq \Delta'$ where $D'=\pi^*D+ E$. %Note that, there exists a non-empty open subset $U_{Y'} \subseteq Y' \backslash \Delta_{Y'}$ such that $(X',\Supp (B'+D'))$ is log smooth over $U_{Y'}$. 
	Hence, there exists an $\R$-Cartier divisor $G \ge 0$, supported by $\Delta_{Y'}$, such that $D'^v -f'^*G$ is very exceptional$/Y'$, where $D'^v$ denotes the vertical$/Y'$ part. Set $R=D'-f'^*G$. We see $R$ satisfies (3). 
	
	Finally, by Lemma \ref{lem-rank}, $f':(X',\Theta) \to Y'$ is an $\R$-klt-trivial fibration, where $\Theta:=B'-R$. Hence, by Lemma \ref{lem-R-klt-trivial}, we apply a canonical bundle formula to obtain $K_{X'}+\Theta \sim_\R f'^*(K_{Y'}+B_{Y'}+M_{Y'})$, such that $(Y',B_{Y'}+M_{Y'})$ is a g-sub-klt generalised pair with the moduli b-divisor $\mathbf{M}$. It remains to check that $(Y',B_{Y'}+M_{Y'})$ is g-klt. Indeed, the effectiveness of $B_{Y'}$ follows from the construction of discriminant divisor.
\end{proof}

\begin{rem}\label{rem-lc-formula}
	Since the arguments for Lemmas \ref{lem-Q-lc-trivial-fib}, \ref{lem-R-klt-trivial} and \ref{lem-rank} are still valid for lc-trivial fibrations and lc pairs, one can easily generalise Theorem \ref{thm-canonical-bundle-formula} to lc pairs with the above argument. Note that, in this case, with notation from Theorem \ref{thm-canonical-bundle-formula}, $(Y',B_{Y'}+M_{Y'})$ is a g-lc generalised pair, and it is g-klt if all lc centres of $(X,B)$ are horizontal$/Y$.
\end{rem}

\begin{proof}[Proof of Theorem \ref{thm-canonical-model}]
	Take a relative Iitaka fibration $f:\overline{X} \to Y$ over $Z$. Replacing $(X,B)$, we may assume $X=\overline{X}$. By definition, we have $\kappa_\iota(X/Y,K_X+B)=0$. So, by a canonical bundle formula, there exists a commutative diagram
	$$
	\xymatrix{
		(X',B') \ar[d]_{f'} \ar[r]^{\pi}   &  (X,B)\ar[d]^{f}\  &\\
		Y' \ar[r]^{\phi} &    Y } 
	$$  
	which consists of birational models $\pi:X' \to X$, $\phi:Y' \to Y$, satisfying the conditions listed in Theorem \ref{thm-canonical-bundle-formula}. Replacing $(X,B),Y$ with $(X',B'),Y'$, we have $K_{X}+B \sim_{\mathbb{R}} f^*(K_{Y}+B_{Y}+M_{Y})+R$. Since $(Y,B_Y+M_Y)$ is g-klt and $K_Y+B_{Y}+M_{Y}$ is big$/Z$, $K_Y+B_Y+M_Y$ has a semi-ample model$/Z$ by \cite[Lemma 4.4(2)]{birkarzhang}. Because $R \ge 0$, $\kappa(X/Y,R^h)=0$ and $R^v$ is very exceptional$/Y$, where $R^h$ (resp. $R^v$) denotes the horizontal (resp. vertical) part over $Y$, by Lemma \ref{lem-ample-model}, we deduce that $(X/Z,B)$ has the canonical model.
\end{proof}

%%%%%%%%%%%%%%%

\end{document}